\DeclareMathOperator{\sSets}{\mathbf{sSets}}
\DeclareMathOperator{\gSp}{\mathbf{G}\textit{Sp}}
\DeclareMathOperator{\gSpOm}{(\mathbf{G}\textit{Sp}, \Omega)}
\DeclareMathOperator{\gOne}{\mathbf{G}\text{[1]}}
\DeclareMathOperator{\g0One}{\mathbf{G}\text{[0,1]}}
\DeclareMathOperator{\EG}{\mathbf{EG}}
\DeclareMathOperator{\BG}{\mathbf{BG}}
\DeclareMathOperator{\bigG}{\mathbf{G}}
\DeclareMathOperator{\spbg}{\sSets \downarrow \BG}
\DeclareMathOperator{\spbgsig}{(\sSets \downarrow \BG, \Sigma)}
\DeclareMathOperator{\hspbgsig}{\mathbb{T}\spbgsig}
\DeclareMathOperator{\hgSpOm}{\mathbb{T}\gSpOm}
\DeclareMathOperator{\Xg}{(X \times_{\bigG} \EG)}
\DeclareMathOperator{\Yg}{(Y \times_{\bigG} \EG)}
\DeclareMathOperator{\rC}{\mathbf{RelCat}}
\DeclareMathOperator{\1h}{\hat{1}}
\DeclareMathOperator{\hcolim}{\mathcal{M}}
\DeclareMathOperator{\hpull}{\mathcal{N}}
\newtheorem{thm}{Theorem}[section]
\newtheorem{prop}[thm]{Proposition}
\theoremstyle{definition}
\newtheorem{df}[thm]{Definition}
\theoremstyle{remark}
\newtheorem*{rem}{Remark}
\newtheorem{ack}{Acknowledgments}
\def\C{\mathcal C}
\begin{document}

\title[On the homotopy theory of $\bigG$ - spaces]{On the homotopy theory of $\bigG$ - spaces}
\author[A. Sharma]{Amit Sharma}
\email{sharm121@umn.edu}
\address {School of Mathematics\\University of Minnesota\\
  Minneapolis, MN 55455}

\date{December 8, 2015}

\begin{abstract}
The aim of this paper is to show that the most elementary homotopy
theory of $\bigG$-spaces is equivalent to a
homotopy theory of simplicial sets over $\BG$, where
$\bigG$ is a fixed group. Both homotopy theories are presented
as Relative categories. We establish the equivalence by constructing
a strict homotopy equivalence between the two relative categories. No
Model category structure is assumed on either Relative Category.

\end{abstract}

\maketitle


\section{Introduction}
A $\bigG$-space is a simplicial set with the action of a group $\bigG$.
The category of $\bigG$-spaces and $\bigG$-equivariant simplicial maps
is denoted by $\gSp$.
We establish an equivalence between the homotopy theory of
$\bigG$-spaces, $\hgSpOm$ and the homotopy theory $\hspbgsig$.
Both homotopy theories are presented as relative categories.
The notation above is meant to distinguish
the homotopy theory from the relative category presenting it.
We denote the category of simplicial sets by $\sSets$ and we will assume
the Kan model category structure on $\sSets$, see \cite{DQ} and \cite{MH99}. The
classifying space of the group $\bigG$ is denoted $\BG$.
The subcategory of weak equivalences, $\Omega$, has the same
objects as $\gSp$ and its morphisms are weak equivalences in $\sSets$
which preserve the group action. The subcategory of weak equivalences, $\Sigma$,
has the same objects as $\spbg$. A morphism in $\Sigma$
is an arrow of the comma category such that the arrow over
the vertex $\BG$ is a weak equivalence in $\sSets$ i.e.
its image under the geometric realization functor is a
weak equivalence of topological spaces.

The goal of this paper is to provide a direct proof, in the context
of relative categories, of a result on homotopy equivalence
of simplicial categories obtained by \emph{simplicial localizations},
see \cite[3.3]{DK81}, \cite{DK80} and \cite{DK82},
of the two relative categories $\gSpOm$ and $\spbgsig$,
which was announced in a note \cite[Theorem 2.1]{DKD80}.
The proof indicated in \cite{DKD80} is based on
constructing suitable simplicial model category structures on
the two relative categories $\gSpOm$ and $\spbgsig$ and then establishing
a homotopy equivalence, in the sense of \cite[2.5]{DK81}, between
the underlying \emph{simplicial homotopy categories}, namely, the
simplicial subcategories of cofibrant and fibrant objects of the two
simplicial model categories in question.


In this paper, we do not assume any model category structure. We have
taken a very direct approach of writing a strict homotopy equivalence
between the relative categories $\gSpOm$ and $\spbgsig$ which establishes
a homotopy equivalence of homotopy theories $\hgSpOm$ and $\hspbgsig$.
Finally, proposition \ref{HERel-HES} along with \cite[2.3]{KB11} and
remark \ref{simp-groups} show that our main result is an equivalent version
of \cite[Theorem 2.1]{DKD80}, thereby proving that theorem.

\cite[Theorem 2.1]{DKD80} is a particular case of \cite[Theorem 2.2.1.2]{JL}.
Jacob Lurie uses some sophisticated simplicial techniques
to prove this generalized version of the theorem in \cite{DKD80},
as a homotopy equivalence of simplicial categories. In a future paper
we plan on extending the arument of the proof of our main result to
obtain an equivalence of homotopy theories of simplicial maps over an
arbitrary simplicial set $B$, and a suitably defined homotopy theory
of functors into the category of simplicial sets.

\begin{ack}
  The author is thankful to Alexander Voronov for frequent discussions
  regarding this paper and also for his suggestions on the proof of
  Theorem \ref{main-result}. The author is also thankful to W. G. Dwyer
  who proposed the idea of Proposition \ref{HERel-HES} to the author
  in a private email message.
\end{ack}

\section{Setup}
A homotopy theory is presented most naturally as a relative category.
In this section we give a brief introduction to relative categories and
explain the notion of weak equivalences in relative categories.
A relative category is a pair $(\C,\Gamma)$, where $\C$ is an
ordinary (small) category and $\Gamma$ is a subcategory of $\C$
having the same set of objects as $\C$. The morphisms of $\Gamma$
will be called \emph{weak equivalences} of $\C$. A \emph{functor of
relative categories} is an ordinary functor which preserves weak equivalences.
The category of all small relative categories and functors of relative categories
,called $\rC$, has been given a model category structure by Barwick and Kan see \cite{BK11}.
Let $\1h$ denote the category $0 \to 1$ having two objects and
exactly one, nonidentity morphism. This category is treated as a relative category
in which every morphisms is a weak equivalences, namely $(\1h, \1h)$.
\begin{df}
\label{strict-homotopy}
A pair of functors of relative categories $F, G:(\C, \Gamma) \to (\C',\Gamma')$
are \emph{strictly homotopic} if there exists a functor of relative categories
\[
H:\C \times \1h \rightarrow \C,
\]
such that $H(x,0) = F(x)$ and $H(x,1) = G(x)$, for all $x \in Ob(\C)$.
$H$ is called a \emph{strict homotopy} between $F$ and $G$.
\end{df}
Notice that $H$ is a natural weak eqivalence between $F$ and $G$,
$H$ assigns to each $x \in Ob(\C)$, a weak equivalence in $\C'$.
A morphism $f:x \rightarrow y$ in $Mor(\C)$ is assigned a
commutative diagram

\[
 \xy
{\ar^{H(x,0 \rightarrow 1)} (0,20)*+{H(x,0)}; (30,20)*+{H(x,1)}};
{\ar_{H(f,0)}(0,20)*+{H(x,0)}; (0,0)*+{H(y,0)}};
{\ar^{H(f, 1)} (30,20)*+{H(x,1)}; (30,0)*+{H(y,1)}};
{\ar_{H(y,0 \rightarrow 1)} (0,0)*+{H(y,0)}; (30,0)*+{H(y,1)}};
\endxy
\]

Moreover, if \textit{f} is a weak equivalence in $\C$, then each arrow in
the above commutative diagram is a weak equivaleces in $\C'$.
\begin{df}
 A morphism of relative categories, $f:(\C, \Gamma) \to (\C', \Gamma')$,
is a \emph{strict homotopy equivalence} if there exists another morphism of
relative categories, $f':(\C', \Gamma') \to (\C, \Gamma)$ (called
the inverse of $f$) such that the compositions $f'f$ and $ff'$ are
strictly homotopic, see \ref{strict-homotopy}, to the identity maps
of $\C$ and $\C'$ respectively.
\end{df}

\subsection{The Homotopy category}
The homotopy category of a relative category $(\C, \Gamma)$ is obtained by
"formally inverting`" all morphisms in the subcategory $\Gamma$. Given two
objects $X, Y \in Ob(\C)$ and an integer $n \ge 0$, a \emph{zigzag} in $\C$ from
$X$ to $Y$ of length $n$ is a sequence
\[
 \xymatrix{
 X = \C_{0} \ar[r] &\cdot \ar@{-}[r]  &\cdot \ar@{-}[r] & \cdot \ar@{-}[r] \cdots &\ar@{-}[r] & \C_{n} = Y \\
 }
\]
of maps in $\C$, each of which is either forward (i.e points to the right) or backward
(i.e points to the left) and such a zigzag is called restricted if all the backward
maps are weak equivalences or arrows in $\Gamma$. The homotopy category will then be the
category $Ho\C$ which has the same objects as $\C$, in which, for every two objects
$X, Y \in Ob(\C)$, the hom-set $Ho\C(X, Y)$ is the set of equivalence classes of the restricted
zigzags in $\C$ from $X$ to $Y$, where two such zigzags are in the same class if
one can be transformed into the other by a finite sequence of operations of the
following three types and their inverses:\\
$(i)$ omiting the identity map\\
$(ii)$ replacing two adjacent maps which go in the same direction by their
composition, and\\
$(iii)$ omitting two adjacent maps when they are the same, but go in the
opposite direction\\
and in which the compositions are induced by the composition of the
zigzags involved. If the category $\C$ is small then the category
$Ho\C$ is also small.
%

\subsection{A functorial construction of classifying spaces of groups}
\begin{sloppypar}
Let $\bigG$ be a discrete group. Let $\gOne$ be the category with one object
$\star$ and $Hom_{\bigG}(\star, \star)$ is isomorphic to $\bigG$. We claim
that $N(\gOne)$ is the classifying space $\BG$. Let $\g0One$ be
the category with one object for each element of $\bigG$
and exactly one arrow between any two objects. Each object is both initial and
final hence the nerve of $\g0One$, denoted $\EG := N(\g0One)$ is a
contractible simplicial set. Now we define the notion of an \emph{action} of
a group on a category An action of a group on a category is an assignment of
an automorphism of the category to each element of the group. The action
satifies the usual assiciativity and unit conditions. We define the
(right) action of $\bigG$ on $\g0One$ by assigning to each element
$g \in \bigG$, a functor $\phi_{g}:\g0One \rightarrow \g0One$
which is defined on objects by group multiplication as follows: $\phi_{g}(x)= xg$
and since there is only one morphism between any two objects in $\g0One$,
there is only one way of defining the functor on arrows. Also,
$\phi_{g_{1}g_{2}} = \phi_{g_{1}}\circ \phi_{g_{2}}$ and if $e$ is the
unit element of the group then $\phi_{ge} = \phi_{g}$. This action of
$\bigG$ on $\g0One$ induces an action of $\bigG$ on $\EG$
This induced action is free and it is easy to check that $\EG/\bigG$
is isomorphic to $\BG$.

A group homomorphism $\bigG \rightarrow \mathbf{H}$ induces a functor
$\gOne \rightarrow \mathbf{H}[1]$ and hence induces a map on
their nerves. Let $Aut(\g0One)$ denote the monoid in $Cat$ whose
object is the category $\g0One$ and morphisms are automorphisms of $\g0One$,
i.e. functors whose source and target is the category $\g0One$ and which have an
inverse. An action as defined above is uniquely determined  by a functor
$\gOne \rightarrow Aut(\g0One))$.
\end{sloppypar}

\section{The equivalence of homotopy theories}

The main result of this paper is presented in this section.
We will prove the equivalence of the two homotopy theories
by constructing a strict homotopy equivalence between
the two relative categories presenting the homotopy theories.
Given a simplicial set $S$, The product space $S \times \EG$
has a $\bigG$ - action and $S \times_{\bigG} \EG$ is the
quotient space of this action which is also the total space of
a fibration of simplicial sets
\[
 q_S:S \times_{\bigG} \EG \to \BG,
\]
which will be called the \emph{quotient map}. Any $G$ - space can be
viewed as a functor from the category $\gOne$ into the category $\sSets$.
The homotopy colimit of a functor $X \in Fun(\gOne, \sSets)$
is the following quotient space:
\[
hocolim X = X \times_{\bigG} \EG,
\]
where the simplicial set $X$ on the right side of the above equation
is the image of the functor $X$.

The following proposition is a key step in the proof of our
main result \ref{main-result}:
\begin{prop}
 The $\bigG$-space $\Xg \times_{q_X} \EG$ is isomorphic
 to the product $\bigG$-space $X \times \EG$.
\end{prop}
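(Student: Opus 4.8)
The plan is to recognize this statement as the simplicial incarnation of the standard fact that pulling an associated bundle $\Xg \to \BG$ back along the principal bundle $p\colon \EG \to \BG$ trivializes it. Concretely, $\Xg \times_{q_X} \EG$ is the fiber product of $q_X\colon \Xg \to \BG$ with the quotient projection $p\colon \EG \to \BG$, and $\bigG$ acts on it through the $\EG$-factor, trivially on $\Xg$, the action on $\Xg$ having already been divided out. Since limits and colimits of simplicial sets are computed degreewise, I would carry out the entire argument on $n$-simplices for each $n \ge 0$, and then assemble the resulting bijections into a simplicial isomorphism; compatibility with faces and degeneracies is automatic because every map in sight is induced by a natural construction.

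The key input, which I would isolate first, is that $p\colon \EG \to \BG$ is a principal $\bigG$-bundle, i.e. a levelwise $\bigG$-torsor over $\BG$. Using $\EG = N(\g0One)$, an $n$-simplex of $\EG$ is a string $(g_0, \dots, g_n) \in \bigG^{n+1}$, the induced action is $(g_0, \dots, g_n)\cdot g = (g_0 g, \dots, g_n g)$, which the paper has already noted is free, and the projection to $\BG_n \cong \bigG^n$ sends $(g_0, \dots, g_n)$ to $(g_0 g_1^{-1}, \dots, g_{n-1} g_n^{-1})$, whose fibers are exactly the $\bigG$-orbits. Hence for any two $n$-simplices $e, e'$ of $\EG$ with $p(e) = p(e')$ there is a \emph{unique} $g \in \bigG$ with $e' = e\cdot g$; this freeness is precisely what the construction of the inverse map will rely on.

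With principality in hand, I would define the comparison map $\psi\colon X \times \EG \to \Xg \times_{q_X} \EG$ by $\psi(x, e) = ([x, e], e)$, where $[x,e]$ denotes the class in $\Xg$; this lands in the fiber product because $q_X[x,e] = p(e)$, and it is plainly simplicial. For the inverse I would set $\phi([x,e], e') = (x\cdot g,\, e')$, where $g$ is the unique element with $e' = e\cdot g$ supplied above. The one genuine verification, and the main obstacle, is that $\phi$ is well defined, i.e. independent of the chosen representative of $[x,e]$: replacing $(x,e)$ by $(x\cdot h, e\cdot h)$ changes the unique group element from $g$ to $h^{-1}g$, and $(x\cdot h)\cdot(h^{-1}g) = x\cdot g$, so the output is unchanged. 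A short computation, using freeness at the identity, then gives $\phi\psi = \id$ and $\psi\phi = \id$, so $\psi$ is a bijection in each degree and hence an isomorphism of simplicial sets.

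Finally I would check $\bigG$-equivariance. With the diagonal action $(x,e)\cdot g' = (x\cdot g', e\cdot g')$ on $X \times \EG$ and the action through the $\EG$-factor on the fiber product, we have $\psi((x,e)\cdot g') = ([x\cdot g', e\cdot g'],\, e\cdot g') = ([x,e],\, e\cdot g') = \psi(x,e)\cdot g'$, the middle equality holding because $(x\cdot g', e\cdot g')$ and $(x,e)$ represent the same class in $\Xg$. This makes $\psi$ an isomorphism of $\bigG$-spaces, as required.
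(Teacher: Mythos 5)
Your proposal is correct and takes essentially the same route as the paper: you define the same comparison map $(x,e)\mapsto([x,e],e)$ from $X\times\EG$ to $\Xg\times_{q_X}\EG$ and invert it using the freeness of the $\bigG$-action on $\EG$, which is exactly the paper's argument. The only difference is one of detail: the paper asserts that freeness makes the map an isomorphism and leaves it at that, whereas you spell out the torsor structure of $\EG\to\BG$, the explicit inverse $([x,e],e')\mapsto(x\cdot g,e')$, and the well-definedness and equivariance checks the paper treats as routine.
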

\begin{proof}
 We prove this proposition by defining a morphism of $\bigG$-spaces
 \[
 K: X \times \EG \to  \Xg \times_{q_X} \EG.
 \]
 In degree $n$, this morphism is defined as follows:
 \[
  (x_n, e_n) \mapsto ((x_n,e_n)\bigG, e_n).
 \]
 Using the freeness of the action of the group $\bigG$ on $\EG$,
 it is easy to check that this map is an isomorphism of simplicial sets.
 Further it is not hard to see that this map preserves the $\bigG$-action.
\end{proof}

\begin{thm}
\label{main-result}
The relative categories $\gSpOm$ and $\spbgsig$ are srtictly homotopy equivalent.
\end{thm}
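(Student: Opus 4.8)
The plan is to realize the strict homotopy equivalence through the homotopy colimit (Borel) construction and its pullback. I would define $F \colon \gSp \to \spbg$ by sending a $\bigG$-space $X$ to the object $q_X \colon \Xg \to \BG$ of $\spbg$, i.e.\ to its homotopy colimit, and define $F' \colon \spbg \to \gSp$ by pulling back along the universal bundle, $F'(S \xrightarrow{p} \BG) = S \times_{\BG} \EG$, with $\bigG$ acting through the $\EG$-factor. I claim that $F$ and $F'$ are mutually inverse strict homotopy equivalences; the work then consists of checking that $F, F'$ are functors of relative categories and of producing the two strict homotopies $F'F \simeq \id$ and $FF' \simeq \id$.

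First I would check that $F$ and $F'$ preserve weak equivalences. For $F$ this is the standard fact that the Borel construction carries an underlying weak equivalence of $\bigG$-spaces to a weak equivalence, since $\EG$ is a free contractible $\bigG$-space. For $F'$ I would invoke right properness of the Kan model structure on $\sSets$: given $f \colon S \to S'$ in $\Sigma$, the canonical identification $S \times_{\BG} \EG \cong S \times_{S'} (S' \times_{\BG} \EG)$ exhibits $F'(f)$ as the base change of the underlying weak equivalence $f$ along the fibration $S' \times_{\BG} \EG \to S'$, itself a pullback of the fibration $\EG \to \BG$; hence $F'(f)$ is a weak equivalence.

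The strict homotopy $F'F \simeq \id_{\gSp}$ is the easy half, and it is exactly the point of the preceding proposition. By that proposition $F'F(X) = \Xg \times_{q_X} \EG$ is naturally isomorphic, as a $\bigG$-space, to $X \times \EG$; composing this isomorphism with the $\bigG$-equivariant projection $X \times \EG \to X$, which is a weak equivalence because $\EG$ is contractible, yields a natural weak equivalence $F'F \Rightarrow \id$. As noted after Definition \ref{strict-homotopy}, such a natural weak equivalence is precisely a strict homotopy, so this assembles into the required functor $\gSp \times \1h \to \gSp$.

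The strict homotopy $FF' \simeq \id_{\spbg}$ is where I expect the real difficulty. On underlying simplicial sets the projection $\alpha_S \colon FF'(S) = (S \times_{\BG} \EG) \times_{\bigG} \EG \to S$ is a weak equivalence, because $S \times_{\BG} \EG$ is a free $\bigG$-space with strict quotient $S$, and for a free action the natural map from the homotopy quotient to the strict quotient is a weak equivalence. The obstacle is that $\alpha_S$ is \emph{not} a morphism over $\BG$: the structure map of $FF'(S)$ is classified by the outer $\EG$-factor, whereas $p \circ \alpha_S$ is classified by the inner one, and these two maps $FF'(S) \to \BG$ genuinely differ. They are, however, canonically homotopic: the two projections $\EG \times \EG \to \EG$ are joined by a $\bigG$-equivariant simplicial homotopy induced by the unique natural transformation between the two projection functors $\g0One \times \g0One \to \g0One$, and this descends to a natural homotopy between the two structure maps of $FF'(S)$. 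The crux of the proof will be to rigidify this homotopy into an honest natural weak equivalence \emph{over} $\BG$ --- either by threading $\alpha_S$ through a mapping-path-space replacement of $S$ in $\spbg$, or by interposing an auxiliary functor and composing two strict homotopies --- so that the resulting natural weak equivalence defines the functor $\spbg \times \1h \to \spbg$ witnessing $FF' \simeq \id$.
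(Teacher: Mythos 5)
Your two functors and your first homotopy agree with the paper's: $F$ is the paper's $\hcolim$, your $F'$ is the paper's $\hpull$ minus the fibrant replacement $R$ (your right-properness argument, pulling back along the fibration $\EG \to \BG$, legitimately covers what $R$ is there for), and your natural weak equivalence $F'F \Rightarrow \id$ via the proposition and the projection $X \times \EG \to X$ is exactly the paper's homotopy $h$. The genuine gap is in the last paragraph. You correctly diagnose that the projection $\alpha_S \colon (S \times_{\BG} \EG) \times_{\bigG} \EG \to S$ fails to be a morphism over $\BG$ (outer versus inner $\EG$-coordinate), but you then stop at a plan --- ``rigidify \ldots either by a mapping-path-space replacement \ldots or by interposing an auxiliary functor'' --- without carrying out either construction. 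That unexecuted step is the crux of the whole theorem, so as it stands the proof is incomplete; moreover, the abstract homotopy between the two classifying maps that you exhibit does not by itself produce a morphism in $\spbg$, let alone a natural one.

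The concrete repair, and what the paper actually does, is simpler than either of your proposed rigidifications: reverse the direction of the homotopy and use a section instead of the projection. The $\bigG$-equivariant map $(\id, \mathrm{pr}_{\EG}) \colon S \times_{\BG} \EG \to (S \times_{\BG} \EG) \times \EG$, $(s,e) \mapsto ((s,e),e)$, descends on $\bigG$-quotients (using $(S \times_{\BG} \EG)/\bigG \cong S$, by freeness of the action) to a map $\sigma_S \colon S \to (S \times_{\BG} \EG) \times_{\bigG} \EG$. Because the outer $\EG$-coordinate of $\sigma_S$ equals the inner one, the structure map of $FF'(S)$ composed with $\sigma_S$ is $p$ on the nose: $\sigma_S$ is a morphism in $\spbg$ strictly over $\BG$, with no rigidification needed. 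It is a section of your $\alpha_S$, hence a weak equivalence by two-out-of-three, and it is natural in $S$ since it is induced by natural equivariant maps. Since Definition \ref{strict-homotopy} only requires a natural weak equivalence in one direction, the homotopy $\id_{\spbg} \Rightarrow FF'$ assembled from the $\sigma_S$ finishes the proof; this is precisely the paper's weak equivalence $s_{/\BG} \colon g \to q_{R(Z) \times_{R(g)} \EG}$ (note that the inducing map must be the diagonal $(\id, \mathrm{pr}_{\EG})$, not the map $(\id, 0)$ through the basepoint as the paper literally writes, since the latter is neither equivariant nor over $\BG$).
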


\begin{proof}
%
 We begin our proof by defining two functor of relative categories
as follows:
The first
\[
 \hcolim:\gSpOm \rightarrow \spbgsig
\]
is defined on objects as
\[
X \mapsto q_X:X \times_{\bigG} \EG \to \BG ,
\]
where $q_X:X \times_{\bigG} \EG \to \BG$ is the quotient map.
A morphism $m:X \to Y \in \gSp$ induces a morphism on the
quotient spaces
\[
 \overline{m}:X \times_{\bigG} \EG \to Y \times_{\bigG} \EG.
\]
Moreover, $\overline{m}$ is a morphism in $\spbg$.
The functor $\hcolim$ is defined on morphisms as follows:
\[
m:X \rightarrow Y \mapsto \overline{m}:X \times_{\bigG} \EG
\to Y \times_{\bigG} \EG.
\]

 Clearly $\hcolim$ is a functor of relative categories.
 A morphism $f:Y \to \BG \in Mor(\sSets)$ uniquely determines
 a fibration $R(f):R(Y) \to \BG$, where $R(Y)$ is simplicial set
 with an acyclic cofibration $Y \to R(Y)$ in the model category
 $\sSets$. We define the second functor of relative categories
 $\hpull:\spbgsig \rightarrow \gSpOm$ as follows:
\[
 f:Y \to \BG \mapsto R(Y) \times_{R(f)} \EG
\]
 This is a functor of relative categories also. Now, the
 composite functors are defined on objects as follows:

\[
 \hpull(\hcolim(X)) = \Xg \times_{q_X} \EG
\]
and
\[
 \hcolim(\hpull(f:X \to \BG)) = q_{R(X) \times_{R(f)} \EG}:(R(X)
 \times_{R(f)} \EG) \times_{\bigG} \EG \to \BG.
\]

We first define the homotopy 

\[
\hpull\hcolim: \gSp \times \1h \rightarrow \gSp
\]
On objects, it is defined as
\[
h(X,0) = \hpull(\hcolim(X)) = \Xg \times_{q_X} \EG,  h(X,1) = X
\]
and on morphisms it is defined as
\[
h(m:X \rightarrow Y, id_{0}) = \Xg \times_{q_X} \EG
\overset{\overline{m} \times_{\BG} \EG} \rightarrow \Yg \times_{q_Y} \EG
\]
\[
h(X \rightarrow Y, id_{1}) = X \rightarrow Y
\]
\[
h(X \rightarrow Y, \1h) = \Xg \times_{q_X} \EG \rightarrow Y.
\]

$\Xg$ is a principal $\bigG$-space over $\BG$. Let $q_X:\Xg \rightarrow \BG$
be the quotient map. 
The $\bigG$ - map $h(X,1)$ is the following composition:
\[
 \Xg \times_{q_X} \EG \cong X \times \EG \rightarrow X \rightarrow Y.
 \]
Clearly, if $X \rightarrow Y$ is a weak equivalence in $\gSpOm$,
then the above map is also a weak equivalence in $\gSpOm$.
Hence $h$ is a weak equivalence preserving functor or $h \in Mor(\rC)$.

Now we define the reverse homotopy $k:1_{\spbgsig} \Rightarrow \hcolim\hpull$.
\[
k:\spbg \times \1h \rightarrow \spbg.
\]
On objects, $k$ is defined as
\[
k(f:X \to \BG,0)=f, k(f,1)= q_{R(X)}:R(X) \times_{R(f)} \EG \to \BG.
\]

On morphisms, $k$ is defined as follows:
\[
k(f \to g,id_{0}) = f \to g,
\]
\[
k(f \to g, id_{1}) = R(X) \times_{R(f)} \EG \to R(Z) \times_{R(g)} \EG,
\]
\begin{sloppypar}
where $f:X \to \BG$ and $g:Z \to \BG$ are objects of $\spbg$.
For every $f \in Ob(\spbg)$, there is a \emph{zero morphism}
obtained by the following composition: $0:R(X) \to \ast \to \EG
\in Mor(\spbg)$. Since the action of $\bigG$ on $R(X) \times_{R(f)} \EG$
is free, there is a weak equivalence of simplicial sets with
contractible fibers
\end{sloppypar}
\[
 F:(R(X) \times_{R(f)} \EG) \times_{\bigG} \EG \to R(X).
\]
The morphism 
\[
R(Z) \times_{R(g)} \EG \overset{(id,0)} \to (R(Z) \times_{R(g)} \EG) \times \EG
\]
induces a morphism on quotient spaces
\[
 s:R(Z) \to (R(Z) \times_{R(g)} \EG) \times_{\bigG} \EG
\]
which is a homotopy inverse of $F$. The following morphism determined by $s$
\[
 s_{/\BG}:g \to q_{R(Z) \times_{R(g)} \EG},
\]
is a weak equivalence in $\spbgsig$.

We define the morphism $k(f \to g, \1h)$ by the following composite in $\spbg$:
\[
 k(f \to g, \1h):f \to g \overset{s_{/\BG}} \to q_{R(Z) \times_{R(g)} \EG}
 \]
%
Clearly, if $f \rightarrow g$ is a weak equivalence in $\spbg$,
then the above arrow is also a weak equivalence in $\spbg$.
Thus $k$ is a morphism of relative categories.
\end{proof}
As mentioned in the introduction, the main goal of this
paper is to provide a direct proof of \cite[Theorem 2.1]{DKD80}.
Now we achieve this goal by proving that our main result is
an equivalent version of \cite[Theorem 2.1]{DKD80}. More
precisely, the following proposition along with our main
result \ref{main-result}, \cite{KB11} and remark \ref{simp-groups}
prove \cite[Theorem 2.1]{DKD80}.
\begin{prop}
\label{HERel-HES}
 The simplicial localization functor takes (strict) homotopy equivalences
 of relative categories to homotopy equivalences of simplicial categories
 in the sense of \cite[2.5]{DK81}.
\end{prop}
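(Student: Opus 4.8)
The plan is to exploit the strict functoriality of the simplicial localization together with the observation, recorded just after Definition \ref{strict-homotopy}, that a strict homotopy is nothing but a natural weak equivalence. Write $L$ for the hammock simplicial localization functor of \cite[3.3]{DK81}, regarded as a functor from $\rC$ to simplicial categories. Let $f\colon(\C,\Gamma)\to(\C',\Gamma')$ be a strict homotopy equivalence with inverse $f'$, so that there are strict homotopies $H$ between $f'f$ and $\id_{\C}$ and $H'$ between $ff'$ and $\id_{\C'}$. Since $L$ is a functor, $L(f')L(f)=L(f'f)$ and $L(f)L(f')=L(ff')$, so it suffices to prove that $L$ carries every strict homotopy to a homotopy of simplicial functors in the sense of \cite[2.5]{DK81}. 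Granting this, $L(f'f)$ and $L(ff')$ are homotopic to the respective identities, whence $L(f)$ is a homotopy equivalence of simplicial categories with homotopy inverse $L(f')$.

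Thus the whole problem reduces to the key claim: if $F,G\colon(\C,\Gamma)\to(\C',\Gamma')$ are relative functors and $H$ is a strict homotopy between them, then $L(F)$ and $L(G)$ are homotopic. Here I would use Definition \ref{strict-homotopy} directly. The homotopy $H$ is a functor of relative categories $H\colon(\C,\Gamma)\times(\1h,\1h)\to(\C',\Gamma')$ with $H\circ i_{0}=F$ and $H\circ i_{1}=G$, where $i_{0},i_{1}\colon(\C,\Gamma)\to(\C,\Gamma)\times(\1h,\1h)$ are the endpoint inclusions $c\mapsto(c,0)$ and $c\mapsto(c,1)$. Applying $L$ and using functoriality again gives $L(F)=L(H)\circ L(i_{0})$ and $L(G)=L(H)\circ L(i_{1})$. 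It therefore suffices to produce a single homotopy $L(i_{0})\simeq L(i_{1})$ between these two fixed simplicial functors; whiskering it with $L(H)$ then yields the desired homotopy $L(F)\simeq L(G)$, because homotopy of simplicial functors is stable under pre- and post-composition. In this way the general case is deduced, by functoriality, from the universal strict homotopy given by the interval $(\1h,\1h)$.

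The heart of the argument, and the step I expect to be the main obstacle, is the production of the homotopy $L(i_{0})\simeq L(i_{1})$. Its source is the unique nonidentity morphism $0\to 1$ of $\1h$: because it is a weak equivalence of $(\1h,\1h)$, its image in the simplicial category $L(\1h,\1h)$ is an equivalence, and in fact $L(\1h,\1h)$ is homotopy-contractible, so that $0$ and $1$ are joined by a canonical path. The natural transformation $i_{0}\Rightarrow i_{1}$ whose component at $c$ is the weak equivalence $(\id_{c},0\to 1)\colon(c,0)\to(c,1)$ should then localize to a simplicial natural transformation $L(i_{0})\Rightarrow L(i_{1})$ all of whose components are equivalences, which is precisely a homotopy in the sense of \cite[2.5]{DK81}. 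The delicate point is that $L$ does not preserve products on the nose, so one cannot simply import the contractibility of $L(\1h,\1h)$ through an identification of $L(\C\times\1h)$ with a product; instead one must check directly, from the hammock description of \cite[3.3]{DK81}, that the levelwise naturality of the endpoint comparison is compatible with the free simplicial resolution out of which $L$ is built, so that the pointwise equivalences $(\id_{c},0\to 1)$ assemble into a genuine simplicial natural weak equivalence. Once this coherence is verified, assembling the pieces as above completes the proof.
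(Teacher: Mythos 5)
The paper states this proposition without proof (the acknowledgments credit the idea to a private communication from W.~G.~Dwyer), so there is no argument in the paper to compare yours against; your proposal must stand on its own. Its scaffolding is sound: using functoriality of $L$ to reduce to the claim that strict homotopies localize to homotopies of simplicial functors is exactly the right move, and the observation driving it --- that, by the remark following Definition \ref{strict-homotopy}, a strict homotopy is precisely a natural weak equivalence --- is the one the paper itself makes.

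The genuine gap is that you never prove the claim on which everything rests. Your reduction through the endpoint inclusions $i_{0}, i_{1}\colon \C \to \C \times \hat{1}$ does not actually simplify the problem: producing the homotopy $L(i_{0})\simeq L(i_{1})$ is itself an instance of the very statement being proved, applied to the natural weak equivalence with components $(\id_{c}, 0\to 1)$, and you concede as much when you note that the contractibility of $L(\hat{1},\hat{1})$ cannot be transported through a product decomposition and that ``one must check directly'' that the pointwise equivalences are compatible with the free simplicial resolution out of which $L$ is built. That compatibility check is the entire mathematical content of the proposition, and your proof terminates with it as a promissory note (``Once this coherence is verified\ldots''). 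The good news is that the needed statement is a known result of Dwyer and Kan --- the homotopy invariance of the (hammock) localization, appearing in the localization series \cite{DK80}, \cite{DK81}, \cite{DK82}, which asserts that relative functors connected by a natural weak equivalence induce homotopic maps of simplicial localizations. Citing that result closes the gap and in fact renders your interval detour unnecessary: apply it directly to the natural weak equivalences $H$ between $f'f$ and $\id_{\C}$ and $H'$ between $ff'$ and the identity of $\C'$, then conclude by functoriality exactly as in your first paragraph. You should also check that your gloss on a homotopy in the sense of \cite[2.5]{DK81} --- a simplicial natural transformation all of whose components are equivalences --- matches Dwyer--Kan's actual definition, since your whiskering step silently assumes that their notion of homotopy is stable under pre- and post-composition.
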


\subsection{Future direction of this research}
\label{future-dir}
The homotopy theory $\hgSpOm$ can be identified with a
functor homotopy theory $\sSets^{h\bigG}$. The construction
of this functor homotopy theory is elaborate and we will
not describe it in this paper. The functor $\hcolim$ in the
proof above is derived from the homotopy colimit functor
$hocolim: \sSets^{h\bigG} \to \sSets$ and the functor $\hpull$
in the proof above is a version of the homotopy pullback functor
taking values in $\bigG$-spaces
\[
 - \times_{\BG}^h \EG: \hspbgsig \to \sSets^{h\bigG}.
\]
These two functors induce an equivalence between homotopy theories
$\sSets^{h\bigG}$ and $\hspbgsig$. Let $B$ be an arbitrary simplicial set,
replacing $\BG$ by $B$ we get homotopy theory $\mathbb{T}(\sSets \downarrow B, \Lambda)$
whose weak equivalences are weak equivalences of
simplicial sets over $B$. We claim the existence of
another (relative) category $\mathfrak{C}(B)$ such that
the homotopy theory $\mathbb{T}(\sSets \downarrow B, \Lambda)$
is homotopy equivalent to a functor homotopy theory
$\sSets^{h\mathfrak{C}(B)}$.

\begin{rem}
\label{simp-groups}
 The definition of a $\bigG$-space considered in \cite{DKD80} is more generic
 in the sense that the group acting on a simplicial set could be a simplicial
 group. The argument of the proof our main result remains valid for this case
 also if we replace $\EG$ and $\BG$ by their equivalent versions for a simplicial
 group $G$, namely, $WG$ and $\overline{WG}$ respectively, see \cite{JPM}.
 We want to achieve our future goal mentioned in \ref{future-dir} by considering
 the action of ordinary monoids on simplicial sets. Therefore writing our main
 result for the action of an ordinary group was more pertinent.
\end{rem}

\bibliographystyle{amsalpha}
\bibliography{BundleClassification}

\end{document}